\DeclareMathSymbol{\theta}{\mathord}{letters}{"23}
\DeclareMathSymbol{\vartheta}{\mathord}{letters}{"12}
\DeclareMathSymbol{\phi}{\mathord}{letters}{"27}
\DeclareMathSymbol{\varphi}{\mathord}{letters}{"1E}
\newcommand\CC{\mathbb{C}}
\newcommand\ZZ{\mathbb{Z}}
\newcommand\NN{\mathbb{N}}
\newcommand\ra{\rightarrow\nobreak}
\newcommand\ab{\hbox{$(a,b)$-mod}\-ule}
\newcommand\CCb{\mathbb{C}[[b]]}
\newcommand\Ead{\widecheck{E}^*} 
\DeclareMathOperator{\diff}{d}
\DeclareMathOperator{\df}{df}
\DeclareMathOperator{\Ker}{Ker}
\DeclareMathOperator{\Hom}{Hom}
\newtheorem{theorem}{Theorem}
\newtheorem{lemma}[theorem]{Lemma}
\newtheorem{definition}[theorem]{Definition}
\theoremstyle{definition}
\theoremstyle{remark}
\newtheorem{remark}[theorem]{Remark}
\numberwithin{theorem}{section}
\title{Jordan-Hölder decomposition of regular hermitian {\ab}s}
\author{Piotr P. Karwasz
\thanks{Institut Matematyki,
  Uniwersytet Gdański, ul. Wita Stwosza 57, 80-952 Gdańsk, Polska\newline
  E-mail: \texttt{piotr.karwasz@mat.ug.edu.pl}\newline
  partially funded by grant NCN 2013/10/E/ST1/00688 ``Rozmaitości o trywialnej pierwszej klasie Cherna''}}
\begin{document}
\maketitle

\begin{abstract}

A classical result of singularity theory states that the spectrum of an isolated hypersurface singularity is symmetric
with respect to $n/2$, where $n$ is the dimension of the enclosing space. We prove a similar result for the
Jordan-Hölder composition series of the {\ab} associated to an isolated hypersurface singularity.

  \textbf{Mathematics Subject Classification (2010):} 32S25, 32S40, 32S50
\end{abstract}
\section{Introduction}

Let $f:(\CC^{n+1}, 0)\ra(\CC, 0)$ be a germ of holomorphic function with an isolated singularity at the origin.
The lattice introduced by \textsc{E.~Brieskorn} in \cite{brieskorn}
\[
  D := \frac{\Omega_0^{n+1}}{\df\wedge\Omega_0^{n-1}},
\]
where $\Omega_0^{p}$ are germs of holomorphic $p$-forms at the origin, is endowed with several structures: a structure as free
$\CC\{a\}$-module and $\CC\{\{b\}\}$-module, where $a:=f\cdot$ and $b:= \df \wedge \diff^{-1}$, a V-filtration and two
different mixed Hodge structures defined by Varchenko (\cite{varchenko}) and Steenbrink (\cite{steenbrink}).

Many invariants of the hypersurface singularity, such as the spectrum and complex monodromy (cf. \cite{schulze2}) can be
computed using the $\CC\{a\}$-module and $\CC\{\{b\}\}$-module structure. Following the work of \textsc{D.~Barlet} we are
therefore interested in the properties of the $b$-adic completion of the Brieskorn lattice considered as an abstract algebraic structure
called {\ab} (\cite{abmodules}):
\begin{definition}
An {\ab} is a free $\CCb$-module $E$ of finite rank over the ring of formal power series in $b$, endowed with a
$\CC$-linear endomorphism `$a$' which satisfies:
  \[ ab - ba = b^2 \]
\end{definition}
The simplest examples of {\ab}s have rank $1$ and are generated by an element $e_\lambda$ satisfying $ae_\lambda =
\lambda be_\lambda$ for a complex number $\lambda$. We will refer to them as \textbf{elementary} {\ab}s of parameter
$\lambda$ and note them with $E_\lambda$.

Since all {\ab}s $E$ coming from the geometric setting are \textbf{regular}, i.e. they are sub-{\ab}s of an {\ab} $E^\#$ which satisfies
$aE^\# \subset bE^\#$, we will be treating only the case of regular {\ab}s.

For an {\ab} $E$ let us consider a filtration:
\[
0 = F_0 \subsetneq F_1 \subsetneq \ldots \subsetneq F_n = E
\]
where the $F_i$ are sub-{\ab} and are \textbf{normal}, i.e. such that $E/F_i$ is a free $\CC[[b]]$-module and hence is still an {\ab}.
We call such filtrations \textbf{Jordan-Hölder} composition series. The main difference between these filtrations and the Hodge and V-filtration
on an {\ab} is that the latter are not normal in general.

The higher residue pairings defined by \textsc{K.~Saito} (\cite{saito}) on the Brieskorn lattice may be viewed as a polarisation
of the mixed Hodge structure on the Brieskorn lattice (\cite{hertlingbl}. This polarisation induces on the associated
{\ab} a non-degenerate hermitian form (cf. \cite{karwasz}).

\textsc{R.~Belgrade} (\cite{belgrade}) uses this form to produce a purely algebraic proof of the symmetry of the spectral numbers
of the isolated hypersurface singularity. Since for regular {\ab} all the quotients $F_j/F_{j-1}$ of a Jordan-Hölder composition series are elementary 
{\ab}s $E_{\lambda_i}$, $\lambda_i\in\CC$, a question may arise, whether these $\lambda_i$ have some symmetry properties.

Unfortunately the quotients of a Jordan-Hölder composition series are not unique up to permutation and vary between series (\cite{abmodules}),
therefore the answer is negative for a general composition series. We may however provide a result similar to the symmetry of a spectrum for a
particular Jordan-Hölder series::
\begin{theorem}
\label{big}
Let $E$ be a regular {\ab} with a non-degenerate hermitian form then it has at least one Jordan-Hölder composition series
\[
0=F_0\subsetneq F_1 \subsetneq \ldots \subsetneq F_n = E
\]
satisfying the following symmetry: the quotients $F_{n-i}/F_{n-i-1}$ and $F_i/F_{i-1}$ are adjoint one of the other and the hermitian form
induces a non-degenerate hermitian form on $F_{n-i}/F_i$.
\end{theorem}

\section{Hermitian form on {\ab}s}

The translation of \textsc{K.~Saito}'s higher residue pairings into the language of {\ab}s suggests the following
pair of definitions:
\begin{definition}
Let $E$ be an {\ab}-module. We call \textbf{dual} {\ab} and denote it by $E^*$, the $\CC[[b]]$-module $\Hom_{\CC[[b]]}(E,
E_0)$ endowed with the $a$-action:
\[
[a\cdot\phi](x) = a\phi(x) - \phi(ax).
\]
where $\phi\in E^*$ and $x\in E$.
\end{definition}
\begin{definition}
Let $E$ be an {\ab}-module. We call \textbf{conjugate} {\ab} and denote it by $\widecheck{E}$, the $\CC$-vector space $E$
endowed with the action:
\begin{align*}
a\cdot_{\widecheck{E}}x &= -a\cdot_E x\\
b\cdot_{\widecheck{E}}x &= -b\cdot_E x
\end{align*}
where $x\in E$.
\end{definition}
The two operations of taking the dual and the conjugate are functors (contravariant and covariant) from the category of
{\ab}s into itself. The composition of both functors is a contravariant functor called \textbf{adjonction}
functor.Keeping this in mind Saito's higher residue pairings are equivalent to the existence of an isomorphism
$\Phi:E\ra\widecheck{E}^*$ of {\ab}s such that $\widecheck{\Phi}^* = \Phi$ (cf. \cite{karwasz}). We will call such an
isomorphism a \textbf{hermitian structure} by analogy to the category of complex vector spaces.

If $\Phi$ is a hermitian structure, then we can associate it a map:
\begin{align*}
H: E \times E &\ra E_0\\
(x,y) &\mapsto \Phi(y)(x)
\end{align*}
satisfying:
\begin{align*}
H(bx, y) &= bH(x, y) = H(x, -by)\\
aH(x, y) &= H(ax, y) - H(x, ay)\\
H(x, y) &= S(b)e_0 \Rightarrow H(y, x) = S(-b)e_0
\end{align*}
that we call \textbf{hermitian form} associated to $\Phi$.

We should remark that not every {\ab} admits a hermitian structure. For most {\ab}s $E$ and $\widecheck{E}^*$ are not
isomorphic and even if they are there are examples of {\ab}s that only admit an anti-hermitian ($\widecheck{\Phi}^* =
-\Phi$) structure. Therefore we call \textbf{self-adjoint} the {\ab}s satisfying $E\simeq\widecheck{E}^*$ and
\textbf{hermitian} those admitting a hermitian structure.

In the general case (\cite{karwasz}) a regular self-adjoint {\ab} can be decomposed (not necessarily in a unique way)
into the direct sum of a hermitian {\ab} and an anti-hermitian one.

Let $\{F_i\}$ be a composition series of a regular hermitian {\ab} $E$:
\[
0=F_0\subsetneq F_1\subsetneq \ldots \subsetneq F_n = E
\]
and let $G_i$be the subset of $\widecheck{E}^*$ which is zero on $F_i$. In this way we obtain a
composition series of $\widecheck{E}^*$:
\[
0=G_n \subsetneq G_{n-1} \subsetneq \ldots \subsetneq G_0 = \widecheck{E}^*
\]
and we may provide the following definition:
\begin{definition}
Let $\{F_i\}$ be a composition series of a regular hermitian {\ab} $E$ with a hermitian structure
$\Phi:E\ra\widecheck{E}^*$. We say that $\{F_i\}$ is \textbf{self-adjoint} if:
\[
\Phi(F_i) = G_{n-1}
\]
or equivalently $\widecheck{F_i/F_{i-1}} \simeq F_{n-i+1}/F_{n-i}$ and $\Phi$ induces a hermitian structure on
$F_{n-i}/F_i$. 
\end{definition}

\section{Proof of theorem \ref{big}}

Before proving the theorem we shall introduce a couple of lemmas.

\begin{lemma}
\label{delta=0}
Let $E$ be a regular hermitian {\ab}, $\Phi:E\ra \Ead$ a hermitian structure and $H$ the associated hermitian form.

If there exists a normal sub-{\ab} $F_1$ of rank $1$ such that \[H(F_1,F_1)=0,\] then there exists a normal sub-{\ab}
$F_{n-1}$ of rank $n-1$ such that $E/F_{n-1} \simeq \widecheck{F_1}^*$ and $F_{n-1}/F_1$ is hermitian.
\end{lemma}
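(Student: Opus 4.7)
The plan is to take $F_{n-1}$ to be the \emph{orthogonal} of $F_1$ with respect to $\Phi$. Concretely, letting $r:\breve{E}^*\to\breve{F_1}^*$ be the restriction map dual to the inclusion $\breve{F_1}\hookrightarrow\breve{E}$, set
\[
F_{n-1}:=\Ker(r\circ\Phi).
\]
Since $F_1$ is normal in $E$, any $\CCb$-linear form on $\breve{F_1}$ extends to $\breve{E}$, so $r$ is a surjection in the category of {\ab}s; hence $F_{n-1}$ is a normal sub-{\ab} of rank $n-1$ and $\Phi$ induces an isomorphism $E/F_{n-1}\simeq\breve{F_1}^*$. The hypothesis $\Phi(F_1)(F_1)=0$ is exactly the statement $F_1\subset F_{n-1}$.

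Viewing $\Phi$ as a bilinear form $E\times\breve{E}\to E_0$, the next step is to define $\bar\Phi(\bar v,\bar w):=\Phi(v,w)$ for any lifts $v,w\in F_{n-1}$. This is well-defined once one checks that $\Phi(F_{n-1},F_1)=0$ (the definition of $F_{n-1}$) and $\Phi(F_1,F_{n-1})=0$, the latter following from the former via the hermitian symmetry $\Phi(v,w)=S(b)e_0\Rightarrow\Phi(w,v)=S(-b)e_0$. The same symmetry transfers to $\bar\Phi$, so it is hermitian.

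The crux is non-degeneracy of $\bar\Phi$. Since $F_1$ and $F_{n-1}$ are normal in $E$, the restriction map gives a canonical identification
\[
\breve{(E/F_1)}^*/\breve{(E/F_{n-1})}^*\simeq\breve{(F_{n-1}/F_1)}^*,
\]
and by construction $\Phi$ identifies $F_{n-1}$ with the annihilator $\breve{(E/F_1)}^*$ of $\breve{F_1}$ in $\breve{E}^*$. Non-degeneracy of $\bar\Phi$ therefore reduces to the equality $\Phi(F_1)=\breve{(E/F_{n-1})}^*$ inside $\breve{E}^*$.

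The inclusion $\Phi(F_1)\subset\breve{(E/F_{n-1})}^*$ is just $\Phi(F_1)(F_{n-1})=0$, already established. The reverse inclusion is the main obstacle: if $v\in E$ satisfies $\Phi(v)(F_{n-1})=0$, then hermitian symmetry yields $\Phi(w)(v)=0$ for every $w\in F_{n-1}$, so $v$ is annihilated by every form in $\breve{(E/F_1)}^*$. A double-annihilator argument, exploiting the normality of $F_1$ and the fact that $\CCb$ is a discrete valuation ring, then forces $v\in F_1$ and completes the argument.
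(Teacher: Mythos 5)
Your proposal is correct and follows essentially the same route as the paper: you take $F_{n-1}$ to be the $\Phi$-orthogonal of $F_1$, identify $\Phi(F_{n-1})$ with $\widecheck{\left(E/F_1\right)^*}$ and $\Phi(F_1)$ with $\widecheck{\left(E/F_{n-1}\right)^*}$ via the adjoint exact sequences, and descend $\Phi$ to a hermitian isomorphism on $F_{n-1}/F_1$. The only (harmless) divergence is at the equality $\Phi(F_1)=\widecheck{\left(E/F_{n-1}\right)^*}$, where the paper observes that $\Phi(F_1)$ is a normal rank-$1$ submodule of the rank-$1$ module $\Ker\pi$ and hence equals it, while you run a double-annihilator argument; both work.
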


\begin{proof}
Let $F_1 \simeq E_\lambda$ and $e_\lambda$ be the generator of $F_1$ and consider the annihilator of this form under
$H$:
\[
F_{n-1} := \{x\in E | H(e_\lambda,x) = 0 \}.
\]

We remark that the condition $H(e_\lambda,e_\lambda)=0$ gives us $F_1 \subset F_{n-1}$ and $F_{n-1}$ is normal, because
it is the kernel of a morphism.

Let us consider the following exact sequence:
\[
0 \ra F_1 \ra E \ra E/F_1 \ra 0
\]
from which we can pass to the adjoint sequence:
\[
0 \ra \widecheck{\left(E/F_1\right)^*} \ra \Ead \overset{\pi}{\ra} \widecheck{F}_1^*
\ra 0.
\]
Since $\pi$ is the restriction morphism of forms on $E$ to the sub-{\ab} $F_1$, the kernel of $\pi$, $K:=\Ker\pi$ can be
described as follows:
\[
K=\{\phi \in \Ead | \phi(F_1)=0\}.
\]
The adjoint sequence being exact, we can identify from now on $\widecheck{\left(E/F_1\right)}^*$ with $K$, i.e.
sub-{\ab} of $\Ead$ whose elements annihilate $F_1$.

If we consider the restriction of the map $\Phi$ to $F_{n-1}$
\[
\Phi|_{F_{n-1}}: F_{n-1} \ra \Ead
\]
and the fact that by definition $\Phi(x)(e_\lambda)=0$ for all $x\in F_{n-1}$, we obtain that $\Phi(F_{n-1})\subset
\widecheck{\left(E/F_1\right)^*}$.

On the other side for all $\phi\in \widecheck{\left(E/F_1\right)^*}$ the element $y=\Phi^{-1}(\phi)$ verifies
$\Phi(y)(e_\lambda)=0$, therefore we have also $\widecheck{\left(E/F_1\right)^*}\subset \Phi(F_{n-1})$. It follows that
$\Phi(F_{n-1})=\widecheck{\left(E/F_1\right)^*}$ and since $\Phi$ is an isomorphism, $F_{n-1}$ is isomorphic to its
image by $\Phi$: $\widecheck{\left(E/F_1\right)^*}$.

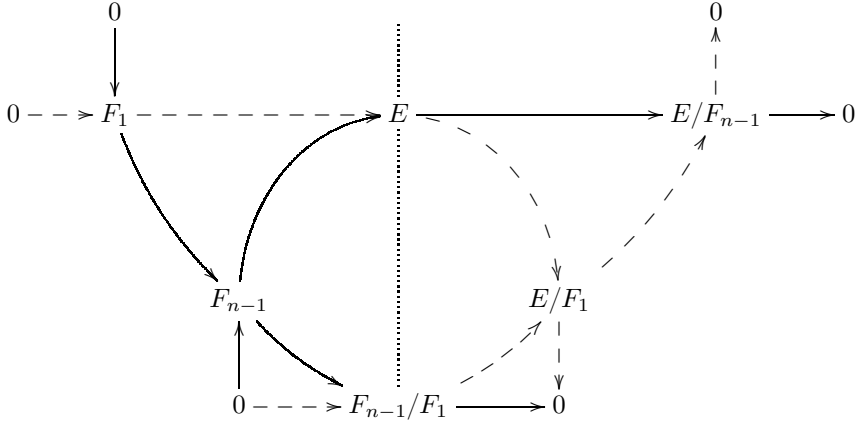
\begin{figure}
\caption{Modules in symmetric positions with respect to the dotted line are each
other's adjoint.}
  \label{fig:exact_sequences}
\[
\xymatrix{
 & 0 \ar[d] & & \ar@{.}[d] & & 0 &\\
0 \ar@{-->}[r] & F_1 \ar@{-->}[rr] \ar@/_/[ddr] & & E \ar[rr]
\ar@{-->}@(r,u)[ddr] \ar@{.}[ddd] & &
E/F_{n-1} \ar@{-->}[u] \ar[r] & 0 \\
 & & & & & & \\
& & F_{n-1} \ar@(u,l)[uur] \ar@/_/[dr] & & E/F_1 \ar@{-->}[d]
\ar@/_/@{-->}[uur]& & \\
 & & 0 \ar[u] \ar@{-->}[r] & F_{n-1}/F_1 \ar@/_/@{-->}[ur] \ar[r] &  0 & & \\
}
\]
\end{figure}

Let us look now at the following exact sequence:
\[
0\ra (F_{n-1}/F_1) \ra (E/F_1) \ra (E/F_{n-1}) \ra 0
\]
and its adjoint sequence:
\[
0\ra \widecheck{\left(E/F_{n-1}\right)^*} \overset{i}{\ra}
\widecheck{\left(E/F_1\right)^*} \overset{\pi}{\ra}
\widecheck{\left(F_{n-1}/F_1\right)^*} \ra 0.
\]
$\pi$ designates the restriction application on the forms of
$\widecheck{\left(E/F_1\right)^*}$. $\Ker\pi$ is thus the
forms of $\widecheck{\left(E/F_1\right)^*}$ that annihilate
$\widecheck{\left(F_{n-1}/F_1\right)^*}$ or with the
convention of the previous paragraph, the forms of $\widecheck{E}^*$ that annihilate
$F_{n-1}$ and $F_1\subset F_{n-1}$:
\[
\Ker\pi = \{\phi\in \widecheck{E}^* \textrm{ s.t. } \phi(F_{n-1})=0\}
\]
We note that the hermitianity of $\Phi$ gives us
\[
\Phi(e_\lambda)(F_{n-1}) = \widecheck{\Phi(F_{n-1})(e_\lambda)} = 0
\]
and therefore we have $\Phi(F_1)\subset \Ker\pi$. An easy calculation shows that
$\Ker\pi$ is of rank $1$. Since
$\Phi(F_1)$ is normal, of rank~$1$ and included into $\Ker\pi$, they must be
equal.

We obtain $\widecheck{\left(E/F_{n-1}\right)^*}\simeq\Ker\pi\simeq F_1$. Now we
know that $\Phi$
sends $F_{n-1}$ onto $(E/F_1)^*$ and $F_1$ onto $\Ker\pi$, so starting with
the following exact sequence:
\[
0 \ra \Ker\pi \hookrightarrow (E/F_1)^* \overset{\pi}{\ra} (F_{n-1}/F_1)^* \ra 0\]
we can obtain another by substituting $\Ker\pi$ with $F_1$ and $(E/F_1)^*$ with
$F_{n-1}$:
\[
0\ra F_1 \ra F_{n-1} \ra \widecheck{\left(F_{n-1}/F_1\right)^*} \ra 0.
\]
or in other terms $\widecheck{\left(F_{n-1}/F_1\right)^*} \simeq (F_{n-1}/F_1)$.
Note that the
isomorphism is given by $x\ra \Phi(x)|_{F_{n-1}}$ and is therefore
hermitian.

The proof may be summarized by the graph of interwoven exact sequences presented
in figure~\ref{fig:exact_sequences}.
\end{proof}

\begin{remark}
\label{good_lambda}
If $ae_\lambda = \lambda be_\lambda$ and $2\lambda \not\in \mathbb{N}$, then
$H(e_\lambda,e_\lambda)=0$. In fact $H(e_\lambda,e_\lambda)\in
E_0$ must satisfy:
\begin{multline*}
aH(e_\lambda,e_\lambda)=H(ae_\lambda,e_\lambda) + H(e_\lambda,-ae_\lambda) =\\
H(\lambda be_\lambda, e_\lambda) + H(e_\lambda, -\lambda be_\lambda) = 2\lambda
bH(e_\lambda,e_\lambda)
\end{multline*}
which has non-trivial solutions in $E_0$ only if $2\lambda\in\mathbb{N}$. The
double inversion of signs in the second
factor are due to the hermitian nature of the form.
\end{remark}

\begin{lemma}
\label{find_delta=0}
Let $E$ be a regular hermitian {\ab} with hermitian form $H$, $\lambda\in\CC$ and $j\in\NN$. If there exist two distinct
normal sub-{\ab}s $F\simeq E_{\lambda+j}$ and $G\simeq E_{\lambda}$ then there exist a normal sub-{\ab} $F_1$ such that
$H(F_1, F_1) = 0$.
\end{lemma}

\begin{proof}
Let $e_f$ and $e_g$ be generators of $F$ and $G$.

By the fundamental property $ab-ba=b^2$ of {\ab}s we have $ab^je_g = (\lambda + j)b\cdot b^je_g$. Consider now the
complex vector space:
\[
V := \{\alpha e_f + \beta b^je_g| \alpha, \beta\in\CC\}
\]
Note that every $v\in V$ satisfies $av = (\lambda + j)bv$. The properties of $H$
gives us:
\[
(a - 2{\lambda + j}b)H(v,w) = 0
\]
which has in $E_0$ only solutions of the form $\alpha b^{2(\lambda + j)}e_0$, $\alpha\in\CC$.  There exists therefore a
$\CC$-bilinear $B$ from $V\times V$ to $\CC$ such that:
\[
H(v,w) = B(v,w)b^{2(\lambda + j)}e_0\quad\forall v,w\in V
\]
which will have an isotropic vector $e$ such that $B(e,e) = 0$, i.e. $H(e, e) = 0$. By taking $F_1 = <e>$ we conclude.
\end{proof}

\begin{lemma}
Let $E$ be a regular {\ab} and:
\[
0\subsetneq \ldots F_{i-1} \subsetneq F_i \subsetneq F_{i+1} \subsetneq
\ldots E
\]
be a Jordan-Hölder composition series with $F_i/F_{i-1}\simeq E_{\lambda_i}$ for
all $i$ and suppose there is a $j$ such
that $\lambda_{j+1} \neq \lambda_j$ mod $\ZZ$.

Then we can find another Jordan-Hölder composition series that differs only in
the $j$-th term $F'_j$ such that
$F'_j/F_{j-1}\simeq E_{\lambda'_j}$ and $F_{j+1}/F'_j\simeq E_{\lambda'_{j+1}}$
with $\lambda_j = \lambda'_{j+1}$ mod
$\ZZ$ and $\lambda_{j+1} = \lambda'_j$ mod $\ZZ$, i.e. we can permute the
quotients up to an integer shift of the
parameters.
\end{lemma}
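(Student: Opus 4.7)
The plan is to reduce the problem to the rank-2 case. Since the modification touches only the $j$-th term, set $M := F_{j+1}/F_{j-1}$, a regular rank-2 $(a,b)$-module containing $N := F_j/F_{j-1} \simeq E_{\lambda_j}$ as a normal sub-module with $M/N \simeq E_{\lambda_{j+1}}$. Producing the new term $F'_j$ is equivalent to producing a normal rank-1 sub-module $G \subset M$, different from $N$, with $G \simeq E_{\lambda'_j}$, $\lambda'_j \equiv \lambda_{j+1}$ mod $\ZZ$, and $M/G \simeq E_{\lambda'_{j+1}}$, $\lambda'_{j+1} \equiv \lambda_j$ mod $\ZZ$. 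Once such $G$ is found, $F'_j := \pi^{-1}(G)$, for the projection $\pi: F_{j+1} \twoheadrightarrow M$, will yield the desired series.

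To find $G$, fix a $\CCb$-basis $(e_1, e_2)$ of $M$ with $e_1$ a generator of $N$, so that $ae_1 = \lambda_j b e_1$ and $ae_2 = \lambda_{j+1} b e_2 + P(b) e_1$ for some $P(b) \in \CCb$. I will look for a generator of $G$ of the form $e_2' = b^{n_0} e_2 + T(b) e_1$ satisfying $ae_2' = \mu b e_2'$. The commutation $ab - ba = b^2$ forces, via the $e_2$-coefficient, $\mu = \lambda_{j+1} + n_0$ for some $n_0 \in \NN$, and the $e_1$-coefficient reduces to the recursion
\[
b^{n_0} P(b) = (\lambda_{j+1} + n_0 - \lambda_j)\, b T(b) - b^2 T'(b),
\]
whose scalars $\lambda_{j+1} + n_0 - \lambda_j - n$ are all nonzero thanks to $\lambda_{j+1} - \lambda_j \notin \ZZ$, so each coefficient of $T$ is uniquely determined.

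The main obstacle is ensuring that $e_2'$ is primitive, i.e.\ $e_2' \notin bM$, so that $G := \CCb \cdot e_2'$ is normal. Inspecting the lowest-order equation produces two regimes: if $P(0) = 0$ one takes $n_0 = 0$ and the recursion gives $T$ for which $M \simeq \CCb e_1 \oplus \CCb(e_2 + T(b) e_1) = E_{\lambda_j} \oplus E_{\lambda_{j+1}}$ splits, and primitivity of $e_2 + T(b) e_1$ is immediate; if $P(0) \neq 0$ one takes $n_0 = 1$, and then $t_0 = P(0)/(\lambda_{j+1} - \lambda_j + 1)$ is nonzero, so $e_2'$ is again primitive.

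Finally, to identify $M/G$: when $n_0 = 0$ the relation $\bar e_2 + T(b) \bar e_1 = 0$ makes $\bar e_1$ a generator with $a \bar e_1 = \lambda_j b \bar e_1$, so $M/G \simeq E_{\lambda_j}$; when $n_0 = 1$ one uses the invertibility of $T(b)$ to eliminate $\bar e_1$, obtaining $a \bar e_2 = (\lambda_{j+1} - P(b)/T(b))\, b\, \bar e_2$, and a rescaling $W(b) \bar e_2$ by the unit solution of a first-order linear ODE in $\CCb$ yields $M/G \simeq E_{\lambda_j - 1}$. In either case the parameter of $M/G$ is $\lambda_j - n_0 \equiv \lambda_j$ mod $\ZZ$, so $F'_j := \pi^{-1}(G)$ provides the sought-after modified composition series, differing from the original only at the $j$-th step.
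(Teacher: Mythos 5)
Your proof is correct, and it follows the same overall reduction as the paper: pass to the rank-two quotient $M=F_{j+1}/F_{j-1}$ and exhibit inside it a normal elementary sub-{\ab} of parameter congruent to $\lambda_{j+1}$ whose quotient has parameter congruent to $\lambda_j$, then pull back by $\pi$. The difference lies in how that sub-module is produced. The paper simply invokes D.~Barlet's classification of regular rank-two {\ab}s, splits into the decomposable case $E_{\lambda_j}\oplus E_{\lambda_{j+1}}$ and the indecomposable case, and in the latter writes down the new generator $x=y+(\lambda_{j+1}-\lambda_j+1)bt$ in one line. You instead avoid citing the classification and rederive exactly the piece of it that is needed: starting from an adapted basis with $ae_2=\lambda_{j+1}be_2+P(b)e_1$, you solve the formal recursion $b^{n_0}P(b)=(\lambda_{j+1}+n_0-\lambda_j)bT(b)-b^2T'(b)$ coefficient by coefficient, the hypothesis $\lambda_{j+1}-\lambda_j\notin\ZZ$ being precisely what makes every scalar invertible; your dichotomy $P(0)=0$ versus $P(0)\neq 0$ (forcing $n_0=0$ or $n_0=1$) reproduces the split/non-split dichotomy, and your identification of the quotient parameter as $\lambda_j-n_0$ via the rescaling $W(b)\bar e_2$ matches the paper's $at=(\lambda_j-1)bt+x$. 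What your route buys is self-containedness and a transparent explanation of where the integer shifts $\lambda_{j+1}+n_0$ and $\lambda_j-n_0$ come from; what the paper's route buys is brevity, at the cost of an external reference. Both arguments correctly note the small residual points (primitivity of the new generator, hence normality, and normality of $F'_j$ in $E$), so there is no gap.
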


\begin{proof}
Let consider $G:=F_{j+1}/F_{j-1}$ and the canonical projection $\pi:E\ra
E/F_{j-1}$. $G$ is a rank two module. Using the classification of regular
{\ab}s of rank $2$ given by D.~Barlet in \cite{abmodules} we see that the
only two possibilities for $G$ are:
\[
G \simeq E_{\lambda_j} \oplus E_{\lambda_{j+1}}
\]
in which case we take $F_j'=\pi^{-1}(E_{\lambda_{j+1}})$ or 
\[
G \simeq E_{\lambda_{j+1}+1,\lambda_j}
\]
generated by $y$ and $t$ satisfying:
\begin{eqnarray*}
ay &=& \lambda_j by\\
at &=& \lambda_{j+1} bt + y
\end{eqnarray*}
that has also another set of generators: $t$ and $x:= y + (\lambda_{j+1} -
\lambda_j + 1)bt$ which satisfy:
\begin{eqnarray*}
ax &=& (\lambda_j + 1) bx\\
at &=& (\lambda_j - 1) bt + x.
\end{eqnarray*}
In this case we take $F'_j=\pi^{-1}(<x>)$.
\end{proof}

\begin{lemma}
\label{unique_F_1}
Let $E$ be a regular hermitian {\ab}. If there is a unique normal elementary sub-(a,b)-modules $F_1 \simeq E_\lambda$ with
$\lambda\in\ZZ$ (resp. $\lambda\in\ZZ+1/2$) and every composition series that starts with $F_1$ contains at least
another elementary quotient $E_\mu$ with $\mu$ integer (resp. integer+1/2).

Then there exists a normal sub-{\ab} $F_{n-1}$ of rank $n-1$ such that $\widecheck{\left(E/F_{n-1}\right)^*}\simeq F_1$
and $F_{n-1}/F_1$ is hermitian.
\end{lemma}

\begin{proof}
Let $F_1$ be the elementary sub-{\ab} of the hypothesis and $\{F_i\}$ a J-H sequence beginning with $F_1$ and such that
$E/F_{n-1}\simeq E_\mu$ with $\mu$ integer (resp.  integer+1/2). We can find such a sequence by using repeatedly the
previous lemma.

Consider the exact sequence:
\[
0 \ra F_{n-1} \ra E \ra (E/F_{n-1}) \ra 0
\]
and the adjoint sequence:
\[
0 \ra \widecheck{\left(E/F_{n-1}\right)^*} \overset{i}{\ra} \widecheck{E}^*
\overset{\pi}{\ra} \widecheck{F}_{n-1}^* \ra 0.
\]
The image of $i$ is a normal elementary sub-{\ab} of $\widecheck{E}^*$ isomorphic to $E_{-\mu}$. But there is only one
such sub-{\ab}: $\Phi(F_1)$. Thus $\widecheck{\left(F/F_{n-1}\right)}^* \simeq F_1$.

By replacing $\widecheck{E}^*$ by $E$ and $\widecheck{(E/F_{n-1})}^*$ by $F_1$ in
the sequence we obtain:
\[
0 \ra F_1 \overset{i}{\ra} E \ra \widecheck{F}_{n-1}^* \ra 0
\]
which is exact and $i$ is the inclusion of sub-{\ab}s, so $\widecheck{F}_{n-1}^*
\simeq (E/F_1)$ or equivalently $F_{n-1}
\simeq \widecheck{\left(E/F_1\right)^*}$. Note that the first isomorphism is
given by $\Phi^{-1}$, while the second by
the restriction of $\Phi$.

Consider the following sequence and its adjoint:
\begin{align*}
0 \ra F_{n-1}/F_1 & \ra  E/F_1  \ra E/F_{n-1} \ra 0\\
0 \ra \widecheck{\left(E/F_{n-1}\right)^*} & \ra
\widecheck{\left(E/F_1\right)^*}  \ra
\widecheck{\left(F_{n-1}/F_1\right)^*} \ra 0
\end{align*}
by replacing $\widecheck{\left(E/F_{n-1}\right)^*}$ and
$\widecheck{\left(E/F_1\right)^*}$ with $F_1$ and
$F_{n-1}$ we obtain:
\[
0 \ra F_1 \overset{\phi}{\ra} F_{n-1} \overset{\pi}{\ra}
\widecheck{\left(F_{n-1}/F_1\right)^*} \ra 0
\]
for the uniqueness of $F_1$, $\phi$ can only be (up to multiplication by a
complex number) the inclusion $F_1 \subset
F_{n-1}$ and hence $\widecheck{\left(F_{n-1}/F_1\right)^*} \simeq
(F_{n-1}/F_1)$. Note that $\pi$ is the restriction of
$\Phi$ to $F_{n-1}$, so the isomorphism is hermitian.
\end{proof}

We can now prove the theorem.

\begin{proof}[Proof of theorem \ref{big}]
We will prove the theorem by induction on the rank of the \ab. For rank $0$
and $1$ the theorem is obvious.

Suppose we proved the theorem for every rank $< n$ and let's prove it for rank $n$. Let find $F_1 \subset F_{n-1}$ of
rank $1$ and $n-1$ such that $\widecheck{\left(E/F_{n-1}\right)^*} \simeq F_1$ and $F_{n-1}/F_1$ is hermitian. We can
have different cases which are exhaustive:
\begin{enumerate}
\item We can find $G\simeq E_\lambda$, a normal elementary sub-{\ab} of $E$, with $2\lambda\not \in\ZZ$.
Then $\Phi(G)(G)=0$ by remark \ref{good_lambda} and we can apply lemma
\ref{delta=0}.

We still need to prove the induction step for {\ab}s whose only normal elementary sub-{\ab}s are isomorphic to
$E_\lambda$, with $2\lambda\in\ZZ$.

\item There are two distinct normal elementary sub-{\ab}s isomorphic to $E_n$ and $E_m$, where $m$, $n$ are integer or
half-integer. We apply lemma \ref{find_delta=0} and \ref{delta=0}.

The {\ab}s that were not included in the previous points have a unique normal
elementary sub-\ab isomorphic to $E_m$, where $m$ is integer or half-integer.

\item There is only one normal elementary sub-{\ab} of parameter equal to
$\lambda$ mod $\ZZ$, where $\lambda=0$ or
$1/2$, but at least two quotients of a J-H sequence are of parameter equal to
$\lambda$ mod $\ZZ$. We apply lemma
  \ref{unique_F_1}.

Only modules of rank at most $2$ (one for each possible value of $\lambda$)
still need to be checked.

\item The rank of $E$ is $2$ and one quotient of a J-H sequence is equal to $0$
mod $\ZZ$, the other equal to $1/2$ mod
$\ZZ$. By the classification of rank $2$ modules this case is impossible. In
fact with the notations of
  \cite{abmodules}:
\begin{eqnarray*}
\widecheck{\left(E_\lambda \oplus E_\mu\right)^*} &\simeq& E_{-\lambda} \oplus
E_{-\mu}\\
\widecheck{E}_{\lambda,\mu}^* &\simeq& E_{1-\lambda,1-\mu}\\
\end{eqnarray*}
so if $\lambda=0$ mod $\ZZ$ and $\mu=1/2$ mod $\ZZ$ the {\ab} is not
self-adjoint.
\end{enumerate}

By induction hypothesis $F_{n-1}/F_1$ has a J-H composition series that verifies
the theorem and by taking the inverse
image by the canonical morphism $F_{n-1} \ra F_{n-1}/F_1$ and adding $0$ and $E$
we find a J-H sequence of $E$ that
satisfies the theorem.
\end{proof}

Since for an anti-hermitian form $A$ we have $A(e,e)=0$ for every $e\in E$, by
using an anti-hermitian version of lemma
\ref{delta=0} alone and proceeding by induction, we can prove theorem~\ref{big}
in the anti-hermitian case.

We wish now to extend the result to all regular self-adjoint {\ab}s. We have
proven in \cite{karwasz} that every
regular {\ab} $E$ can be decomposed into a direct sum of hermitian or
anti-hermitian {\ab}s. We can hence prove the
following theorem:

\begin{theorem}
Let $E$ be a self-adjoint regular {\ab}. Then it admits a self-adjoint
Jordan-Hölder composition series.
\end{theorem}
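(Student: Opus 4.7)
The plan is to reduce the general self-adjoint case to Theorem~\ref{big} (and its anti-hermitian analogue) via the orthogonal decomposition established in \cite{hermitian}: every regular self-adjoint {\ab} $E$ splits as a direct sum $E = E_1 \oplus \cdots \oplus E_k$ in which each summand $E_i$ is either hermitian or anti-hermitian, and in particular self-adjoint. Theorem~\ref{big} furnishes a self-adjoint Jordan--Hölder sequence for each hermitian summand. For an anti-hermitian summand the same scheme of proof applies, but more simply: as noted just before the statement of the theorem, the identity $A(e,e)=0$ holds automatically for any anti-hermitian form $A$, so the hypothesis of (the anti-hermitian version of) lemma~\ref{delta=0} is satisfied for every normal elementary sub-{\ab}, and the induction on rank goes through without the case distinction needed in the hermitian setting. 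We may therefore assume that each $E_i$ carries a self-adjoint sequence $0 = F_0^{(i)} \subsetneq \cdots \subsetneq F_{n_i}^{(i)} = E_i$.

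The remaining task is to weld these $k$ self-adjoint sequences into a single self-adjoint Jordan--Hölder sequence of $E$. By induction on $k$ it suffices to treat two summands $E_1 \oplus E_2$ of ranks $m$ and $p$. The idea is to nest the sequence of $E_2$ inside the middle of the sequence of $E_1$: first traverse the lower half of $E_1$, then all of $E_2$, then the upper half of $E_1$. When both $m = 2k+1$ and $p = 2l+1$ are odd one must additionally interleave the two middle elements (each of parameter~$0$) at positions $k+l+1$ and $k+l+2$, so that the combined parameter sequence stays antisymmetric about the centre; the three remaining parity cases reduce to a straight nesting. A direct verification then shows that the resulting parameter sequence $\nu_1,\dots,\nu_{m+p}$ satisfies $\nu_j + \nu_{m+p-j+1} = 0$, and that for $0 \leq j \leq [(m+p)/2]$ each middle quotient $H_{m+p-j}/H_j$ is a direct sum of factors of the form $F_{m-j'}^{(1)}/F_{j'}^{(1)}$, $F_{p-j''}^{(2)}/F_{j''}^{(2)}$, copies of $E_0$, or whole summands $E_i$, all of which are self-adjoint by hypothesis.

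The main obstacle is not conceptual but combinatorial: choosing the insertion points carefully so that the middle quotient of the combined sequence has parameter~$0$ in the appropriate parity cases, and then verifying self-adjointness of each intermediate quotient $H_{m+p-j}/H_j$ by unpacking the nested construction. Once this bookkeeping is settled for two summands, iterating over the hermitian/anti-hermitian decomposition yields the desired self-adjoint Jordan--Hölder sequence for $E$.
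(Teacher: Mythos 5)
Your proposal is correct and follows essentially the same route as the paper: decompose $E$ into hermitian and anti-hermitian summands, apply Theorem~\ref{big} to each (with the anti-hermitian analogue justified exactly as you do, via $A(e,e)=0$ and lemma~\ref{delta=0} alone), and then weld the resulting self-adjoint sequences by nesting one inside the middle of the other, with the same parity bookkeeping for the two central rank-one quotients. The only cosmetic difference is that the paper refines the splitting into indecomposable summands and handles pieces of the form $G\oplus\check{G}^*$ (with $G$ not self-adjoint) by an explicit adjoint-series construction instead of routing them through Theorem~\ref{big}.
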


\begin{proof}
  Let decompose $E$ into
  \[
  E = \bigoplus_{i=1}^m H_i
  \]
where $m$ is an integer, while the $H_i$ are either indecomposable self-adjoint
or of the form $G \oplus \widecheck{G}^*$,
  where $G$ is indecomposable non self-adjoint {\ab}.

Each term of this sum admits a self-adjoint composition series. In fact if $H_i$
is indecomposable self-adjoint, then
it is hermitian or anti-hermitian. We can therefore apply the previous
theorem~\ref{big}.
  
On the other hand if $H_i$ is the sum $G \oplus \widecheck{G}^*$ of a module and its
adjoint, we can easily find a
self-adjoint Jordan-Hölder composition series. Take in fact any Jordan-Hölder
series of $G$,
  \[
  0 = G_0 \subsetneq \dots \subsetneq G_n = G.
  \]
  and consider the adjoint series
  \[
0 = \widecheck{\left(G/G_n\right)}^* \subsetneq
\widecheck{\left(G/G_{n-1}\right)}^* \subsetneq
  \dots \widecheck{\left(G/G_0\right)}^* = \widecheck{G}^*.
  \]
Then the following composition series of $G \oplus \widecheck{G}^*$ is self-adjoint:  \begin{multline*}
0 = G_0 \subsetneq G_1 \subsetneq \dots G = G \oplus
\widecheck{\left(G/G_n\right)}^*
  \subsetneq G \oplus \widecheck{\left(G/G_{n-1}\right)}^* \subsetneq\\
  \dots \subsetneq G
  \oplus \widecheck{\left(G/G_0\right)}^* = G \oplus \widecheck{G}^*.
\end{multline*}
  
We will now prove the theorem on induction on $m$. The case $m=1$ was already
proven.

Suppose now $m\geq2$ and let $E':=H_1$ and~$F:=\sum_{i=2}^m H_i$. We have
therefore $E = E' \oplus F$, and $E'$ and
$F$ are both self-adjoint. By the remark above we can find a self-adjoint
composition series of $E'$:
  \[
  0 = E'_0 \subsetneq \dots \subsetneq E'_r = E'
  \]
while by induction hypothesis, we can find a self-adjoint composition series of
$F$:
  \[
  0 = F_0 \subsetneq \dots \subsetneq F_s = F.
  \]
  Then the following composition series is self-adjoint:
  \begin{multline*}
  0 = E'_0 \subsetneq E'_1 \subsetneq \dots \subsetneq E'_{[r/2]}
  \subsetneq E'_{[r/2]} \oplus F_1 \subsetneq \dots \subsetneq E'_{[r/2]}
  \oplus F_{[s/2]} [\cdots] \\
  E'_{[(r+1)/2]} \oplus F_{[(s+1)/2]} \subsetneq
  E'_{[(r+1)/2]} \oplus F_{[(s+1)/2] + 1} \subsetneq \dots \subsetneq
  E'_{[(r+1)/2]} \oplus F \\\
  \subsetneq E'_{[(r+1)/2] + 1} \oplus F \subsetneq \dots \subsetneq E'
  \oplus F,
  \end{multline*}
  where depending on the parity of $r$ and~$s$, $[\cdots]$ stands for
  \begin{enumerate}
    \item the $=$ sign if $r$ and $s$ are both even.
    \item the $\subsetneq$ sign if one is even and the other odd.
    \item the subsequence
      \[
      \subsetneq E'_{[r/2]} \oplus F_{[(s+1)/2]} \subsetneq
      \]
	
      This case needs a short verification. If $r$ and $s$ are odd, then
      the two central quotients of the series are isomorphic to
      $E'_{[(r+1)/2]}/E'_{[r/2]}$ and $F_{[(s+1)/2]}/F_{[s/2]}$. Since
      $E'_i$ and~$F_i$ are self-adjoint series both quotients are
      self-adjoint {\ab}s of rank~$1$. They are therefore isomorphic to
      $E_0$.
  \end{enumerate}
\end{proof}

\bibliographystyle{alpha}
\bibliography{biblio}
\end{document}